\DeclareFontFamily{U}{matha}{\hyphenchar\font45}
\DeclareFontShape{U}{matha}{m}{n}{
      <5> <6> <7> <8> <9> <10> gen * matha
      <10.95> matha10 <12> <14.4> <17.28> <20.74> <24.88> matha12
      }{}
\DeclareSymbolFont{matha}{U}{matha}{m}{n}
\DeclareMathSymbol{\cap}           {2}{matha}{"58}
\DeclareMathSymbol{\cup}           {2}{matha}{"59}
\DeclareMathSymbol{\sim}           {3}{matha}{"12}
\DeclareFontFamily{U}{mathx}{\hyphenchar\font45}
\DeclareFontShape{U}{mathx}{m}{n}{
      <5> <6> <7> <8> <9> <10>
      <10.95> <12> <14.4> <17.28> <20.74> <24.88>
      mathx10
      }{}
\DeclareSymbolFont{mathx}{U}{mathx}{m}{n}
\DeclareMathDelimiter{\lbrace}     {4}{matha}{"74}{mathx}{"20}
\DeclareMathDelimiter{\rbrace}     {5}{matha}{"75}{mathx}{"28}
\newcommand{\myuline}[1]{%
  \uline{\phantom{#1}}%
  \llap{\contour{white}{#1}}%
}
\let\leq\leqslant
\let\geq\geqslant
\let\epsilon\varepsilon
\let\theta\vartheta
\newcommand{\B}{\mathsf{B}}
\newcommand{\C}{\mathsf{C}}
\newcommand{\E}{\mathds{E}}
\renewcommand{\H}{\mathsf{H}}
\renewcommand{\P}{\mathds{P}}
\newcommand{\Q}{\mathcal{Q}}
\newcommand{\R}{\mathds{R}}
\renewcommand{\b}{\mathscr{b}}
\newcommand{\w}{\mathscr{w}}
\newcommand{\sphere}{\mathbb{S}}
\theoremstyle{definition}
\newenvironment{manualtheorem}[1]{%
  \IfBlankTF{#1}
    {}
    {}%
  \manualtheoreminner
}{\endmanualtheoreminner}
\newtheorem*{lem}{Lemma}
\newtheorem*{defn}{Definition}
\theoremstyle{remark}
\newtheorem*{rem}{Remark}
\title{Separating balls with partly random hyperplanes with a view to partly random neural networks}
\author{Olov Schavemaker, Utrecht University, the Netherlands}
\date{}
\begin{document}
\maketitle
\raggedright

\begin{abstract}
We derive exact expressions for the probabilities that partly random hyperplanes separate two Euclidean balls. The probability that a fully random hyperplane separates two balls turns out to be significantly smaller than the corresponding probabilities for hyperplanes which are not fully random in certain cases. Our results motivate studying partially random neural networks and provide a first step in this direction.
\end{abstract}

\section{Introduction}\label{sec:intro}

In this short paper we deduce exact expressions for the probabilities that three types of partly random hyperplanes separate two Euclidean balls. Letting $\H[w;b]$ be the zero locus of $u\mapsto(w\mid u)-b,$ we consider the following three types of partly random hyperplanes: $\H[w;\b],\H[\w;b],$ and $\H[\w;\b],$ where $w,b$ are deterministic and $\w,\b$ are random variates.

The hyperplane parametrization $\H[w;b]$ is redolent of neural networks by design; a ``neuron'' in a neural network consists of a nonlinear activation function being applied to a linear function of the form $u\mapsto(w\mid u)-b.$

This paper was inspired by \cite{sjoerd}, where the authors showed that random neural networks can make two fairly arbitrary subsets of Euclidean space linearly separable by covering those two sets with small enough balls. In \cite{sjoerd} they consider an architecture with two layers. Their result relies on letting the first layer be wide enough for it to separate all the balls covering one set from all of the balls covering the other set with high probability. This boils down to drawing enough fully random hyperplanes to separate all the balls covering one set from all the balls covering the other set with high probability. Clearly the probability one random hyperplane separates one pair of balls is of interest if one wants to get insight into how many random hyperplanes are needed to separate all pairs of balls with high probability.

Since sufficiently nice manifolds are known to admit nice covering number bounds \cite[Thm 4.2]{iwen}, \cite[Lemma B.1]{yap}, \\ \cite[{\S}5]{niyogi} the result in \cite{sjoerd} and possible improvements thereof could be of great interest to those interested in classifying manifolds; a commonly assumed situation.

Random neural networks might sound void of practical significance at first as one generally wants to learn the weights and biases. Well, learning the weights and biases can be slow, sensitive to hyperparameters, and final values may be suboptimal \cite{rvfl}.

One way to avoid these undesirabilities is to forgo learning some of the weights or/and biases; a popular architecture with this design philosophy is the so-called Random Vector Functional Link (RVFL) network \cite{rvfl}. In the case of RVFL networks, it has been known for quite a while that they are performant despite the random weights and biases \cite{duin,rawn}.

An RVFL consists of two layers, the first one being fully random and the second one being learned. As far as the author knows, partially random layers have not yet been studied despite having the potential to be a best of both worlds golden mean. In \S\ref{sec:discussion} we shall discuss how our theorems hint at this indeed being the case.

In \S\ref{sec:setting} we clarify the setting wherein we operate throughout the paper, including notation and assumptions. \S\kern-1pt\S\ref{sec:random_biases}--\ref{sec:fully_random} are each dedicated to a probability that a partly random hyper- plane separates two balls; $\H[w;\b],\H[\w;b],$ and $\H[\w;\b]$ in that order. Lastly, we discuss our results and their possible implications in \S\ref{sec:discussion}.

\section{Setting}\label{sec:setting}

Since we are interested in separating balls in $\R^n$ with hyperplanes, we introduce the following notation.

The Euclidean inner product and norm on $\R^n$ will be denoted by $(\cdot\!\mid\!\cdot)$ and $\lvert\mkern2mu\cdot\mkern2mu\rvert$ resp.

Let $\B[c,r]=\{u\in\R^n:\lvert c-u\rvert<r\},$ where $c\in\R^n$ and $r>0.$ Let $\H[w;b]$ be the zero locus of $u\mapsto(w\mid u)-b,$ where $b\in\R$ and $w\in\sphere^{n-1}=\{u\in\R^n:\lvert u\rvert=1\}.$

The notations $\B[c,r]$ and $\H[w;b]$ parametrize all balls and hyperplanes resp., albeit not quite uniquely, as $\H[w;b]=$ $\H[-w;-b].$
\begin{defn}
We say $\H[w;b]$ \myuline{separates} $\B[c,r]$ and $\B[x,p]$ if $(w\mid u)-b\gtrless 0$ for all $u\in\B[c,r]$ whereas $(w\mid u)-b\lessgtr 0$ for all $u\in\B[x,p].$
\end{defn}
The balls $\B[c,r]$ and $\B[x,p]$  clearly cannot be separated if $\B[c,r]\cap\B[x,p]\neq\varnothing.$ Since we are interested in partially random hyperplanes, we assume $\lvert c-x\rvert=p+r+\delta$ for some $\delta>0.$ If $\delta=0,$ only one hyperplane separates the balls $\B[c,r]$ and $\B[x,p];$ namely, $\{c-x\}^\perp+v,$ where $\{v\}=\partial\B[c,r]\cap\partial\B[x,p].$

When considering a hyperplane $\H[w;b],$ we shall refer to $w,b$ as the \myuline{weight} and \myuline{bias} resp., in keeping with neural networks. We account random weights uniformly on $\sphere^{n-1}$ and random biases uniformly on $[-k,k]$ with $k>0.$

Henceforth we shall always consider $\B[c,r]$ and $\B[x,p]$ so that $\lvert c-x\rvert=p+r+\delta$ for some $\delta>0.$ We withal assume that $n\geq 2$ and that $\w,\b$ are uniformly distributed on $\sphere^{n-1}$ and $[-k,k]$ resp., where $k\geq\lvert c\rvert\vee\lvert x\rvert>0~(\Leftarrow c\neq x).$

\section{Random biases}\label{sec:random_biases}

The first and easiest probability we are interested in is
\begin{align*}
\P_{\!\b}\{\exists w\in\sphere^{n-1}:\H[w;\b]\text{ separates }\B[c,r]\text{ and }\B[x,p]\}
\end{align*}
To compact notation, we will shorten this to
\begin{align*}
\P_{\!\b}\{\exists w:\H[w;\b]\text{ separates}\}
\end{align*}
Before we evaluate this probability, however, we return to the aforementioned $\H[w;b]=\H[-w;-b],$ which suggests that it may be more natural to consider $w\in\sphere^{n-1}\!/\{\pm\}\cong$ $\P^{n-1}$ instead, where $\P^{n-1}$ comprises lines in $\R^n$ through the origin. Indeed, the orthogonal complement of a hyper- plane is a line through the origin.

We therefore introduce a second parametrization of hyper- planes, namely $\H[a,v]=\{a\}^\perp+v,$ where $a\in\P^{n-1}$ and $v\in a$ is the unique point on the line $a$ wherethrough the hyperplane passes. This parametrization is unique and we have $\H[w;(w\mid v)]=\H[w\R,v]$ whenever $w\in\sphere^{n-1}$ and $v\in\R w.$

\begin{manualtheorem}{\ref*{sec:random_biases}}\label{thm:random_biases}
Let $\b$ be as in \S\ref{sec:setting}. Then
\begin{gather*}
\P_{\!\b}\{\exists w:\H[w;\b]\text{ separates}\}=\\
\P_{\!\b}\{\H[\omega\R,\omega\b]\text{ separates}\}=\delta/(2k)
\end{gather*}
where $\sphere^{n-1}\ni\omega$ is parallel to the line through $c$ and $x,$ which we will call $a.$
\end{manualtheorem}
\begin{proof}
Starting with the second equality, we can introduce coordinates onto $a$ by considering $\Omega:a\ni v\mapsto(\omega\mid v).$ Let the intervals $(s,t)$ and $(y,z)$ be the images of $\B[c,r]\cap a$ \& $\B[x,p]\cap a$ under $\Omega$ resp.; i.e., $s$ is the coordinate of one of the endpoints of the line segment $\B[c,r]\cap a,$ etc.

WLOG assuming that $t<y,$ we must have $y-t=\delta.$ Now, $\Omega(\omega\b)=\b$ so
\begin{align*}
\P_{\!\b}\{\H[\omega\R,\omega\b]\text{ separates}\}=\P_{\!\b}\{\b\in[t,y]\}=\delta/(2k)
\end{align*}
if $[t,y]\subseteq[-k,k];$ indeed, since $t,y\in(c,x),$
\begin{align*}
\lvert t\rvert\vee\lvert y\rvert\leq\lvert c\rvert\vee\lvert x\rvert\leq k
\end{align*}
As for the first equality, it is geometrically optimal to have $\H[w;\b]\perp a,$ which is achieved by letting $w=\omega.$ Indeed, each separating hyperplane must intersect $a$ with coordinate in the interval $[t,y]$ and every coordinate is achieved by at most one realization of $\b.$ Only $\H[\omega\R,\omega\b]$ can achieve the coordinates $t$ and $y,$ and since we have already shown it in fact achieves the entire interval, $w=\omega$ is optimal.

Since $(\omega\mid\omega\b)=\b\lvert\omega\rvert^2=\b$ the first equality follows.
\end{proof}
\fbox{Henceforth $a$ will always denote the line through $c$ and $x.$}

\section{Random weights}\label{sec:random_weights}

In this section we consider
\begin{align*}
\P_{\!\w}\{\exists b\in[-k,k]:\H[\w;b]\text{ separates}\}
\end{align*}
But first we recall two facts. First off, $\P^{n-1}$ is metrized by the angle between two lines $\measuredangle:\P^{n-1}\times\P^{n-1}\to[0,\tfrac{\pi}{2}].$ A closed metric ball on $\P^{n-1}$ is thus of the form
\begin{align*}
\C[\epsilon,\alpha]=\{\ell\in\P^{n-1}:\measuredangle(\ell,\epsilon)\leq\alpha\}
\end{align*}
which is a (double) cone with axis $\epsilon.$

Secondly, the \myuline{regularized beta function} is given by
\begin{align*}
I(\kappa;y,z)=\frac{1}{B(y,z)}\int_0^\kappa s^{y-1}(1-s)^{z-1}\,ds
\end{align*}
where $B(y,z)$ is the familiar beta function, $y,z>0,$ and $\kappa\in[0,1].$

\begin{manualtheorem}{\ref*{sec:random_weights}}\label{thm:random_weights}
Let $\w$ be as in \S\ref{sec:setting}. Then
\begin{gather*}
\P_{\!\w}\{\exists b:\H[\w;b]\text{ separates}\}=I(\Q;\tfrac{n-1}{2},\tfrac{1}{2})\\
=\P_{\!\w}\{\H[\w\R,(\w\mid v)\w]\text{ separates}\}
\end{gather*}
where $\Q=1-(\tfrac{p+r}{p+r+\delta})^2$ and $v=\tfrac{p}{p+r}c+\tfrac{r}{p+r}x.$
\end{manualtheorem}
\begin{rem}
The point $v$ is the vertex of the (unique) double cone touching both balls. The line $a$ is its axis.
\end{rem}
\begin{proof}
Let $\phi$ be the angle between the generatrices of the aforementioned double cone and $a.$ Then $\phi\in(0,\tfrac{\pi}{2})$ and $\sin\phi=\tfrac{p+r}{p+r+\delta}$ because
\begin{align}\label{eq:sin_phi}
\begin{split}
p+r+\delta=\lvert c-x\rvert=\lvert c-v\rvert&+\lvert v-x\rvert\\
=\frac{r}{\sin\phi}&+\frac{p}{\sin\phi}
\end{split}
\end{align}
using that tangent lines to circles are perpendicular to the radii at the points of tangency. WLOG let $v=0,$ so that $a\in\P^{n-1}$ for convenience.

The idea of the proof is to show that having the randomly oriented hyperplane pass through the vertex of the double cone is optimal. Separating the balls is then equivalent to separating the two nappes of the double cone.

Let $\alpha\in(0,\tfrac{\pi}{2}).$ We claim that
\begin{gather*}
\{\omega\in\P^{n-1}:\measuredangle(\ell,\omega)<\tfrac{\pi}{2}\text{ for all }\ell\in\C[a,\alpha]\}\\
=\C[a,\tfrac{\pi}{2}-\alpha]
\end{gather*}
Indeed, if $\ell\in\C[a,\alpha]$ and $\omega\in\C[a,\tfrac{\pi}{2}-\alpha],$ then
\begin{align*}
\measuredangle(\ell,\omega)\leq\measuredangle(\ell,a)+\measuredangle(a,\omega)\leq\alpha+(\tfrac{\pi}{2}-\alpha)=\tfrac{\pi}{2}
\end{align*}
Conversely, if $\omega\notin\C[a,\tfrac{\pi}{2}-\alpha],$ then $\measuredangle(\omega,\nu)<\alpha,$ with $\nu$ being the line through $v$ perpendicular to $a$ in the plane $a$ spans with $\omega.$ Letting $\ell$ be the $90^\circ$ rotation of $\omega$ about $v$ within the plane it spans with $a,$ plainly $\ell\in\C[a,\alpha]$ since $\measuredangle(\ell,a)=\measuredangle(\omega,\nu)$ and $\measuredangle(\ell,\omega)=\tfrac{\pi}{2}.$

Ergo, $\P_{\!\w}\{v+\{\w\}^\perp\text{ separates}\}$ equals the probability of separating the two nappes, which may be expressed as
\begin{gather}
\P_{\!\w}\{\measuredangle(\ell,\w\R)<\tfrac{\pi}{2}\text{ for all }\ell\in\C[a,\phi]\}\notag\\
=\P_{\!\w}\{\w\R\in\C[a,\tfrac{\pi}{2}-\phi]\}\label{eq:translated_cone_prob}
\end{gather}
because $\P^{n-1}\cong\sphere^{n-1}\!/\{\pm\}.$ However, \cite[(1)]{li} yields that
\begin{align*}
\P_{\!\w}\{\w\in\C[a,\tfrac{\pi}{2}-\phi]\}=2\times\tfrac{1}{2}I(\cos^2\phi;\tfrac{n-1}{2},\tfrac{1}{2})
\end{align*}
Using that $\cos^2\phi=1-\sin^2\phi,$ we have thus shown that
\begin{align*}
\P_{\!\w}\{v+\{\w\}^\perp\text{ separates}\}=I(1-(\tfrac{p+r}{p+r+\delta})^2;\tfrac{n-1}{2},\tfrac{1}{2})
\end{align*}
All that remains to be shown is therefore that
\begin{gather*}
\P_{\!\w}\{\exists b:\H[\w;b]\text{ separates}\}=\P_{\!\w}\{v+\{\w\}^\perp\text{ separates}\}\\
=\P_{\!\w}\{\H[\w\R,(\w\mid v)\w]\text{ separates}\}
\end{gather*}
As the intersection of $v+\{\w\}^\perp$ and $\w\R$ is $\w(\w\mid v),$ it follows that
\begin{align*}
v+\{\w\}^\perp=\H[\w\R,(\w\mid v)\w]=\H[\w;(\w\mid v)]
\end{align*}
That $b=(\w\mid v),$ or equivalently, $v\in\H[\w;b],$ is geometri- cally optimal may be seen as follows: if $q,$ the intersection of $a$ and $\H[\w;b],$ is WLOG closer to $\B[c,r]$ than $v$ is, then the cone touching $\B[c,r]$ with vertex $q$ has wider aperture than $\C[a,\phi],$ so the corresponding separating probability would be smaller because \eqref{eq:translated_cone_prob} is decreasing in $\phi.$

Lastly, $b=(\w\mid v)\Rightarrow\lvert b\rvert\leq\lvert\w\rvert.\lvert v\rvert\leq\lvert c\rvert\vee\lvert x\rvert\leq k.$
\end{proof}
\fbox{Henceforth $\Q,v$ will always be as in Theorem \ref{thm:random_weights}.}

\section{Fully random case}\label{sec:fully_random}

Lastly, we compute $\P_{\!\w,\b}\{\H[\w;\b]\text{ separates}\}.$
\begin{manualtheorem}{\ref*{sec:fully_random}}\label{thm:fully_random}
Let $\w,\b$ be as in \S\ref{sec:setting}. Then
\begin{gather*}
\P_{\!\w,\b}\{\H[\w;\b]\text{ separates}\}=\\
\frac{\lvert c-x\rvert}{2k}\biggl(\frac{\displaystyle\Q^{(n-1)/2}}{\tfrac{n-1}{2}B(\tfrac{n-1}{2},\tfrac{1}{2})}-\tfrac{p+r}{p+r+\delta}I(\Q;\tfrac{n-1}{2},\tfrac{1}{2})\biggr)
\end{gather*}
\end{manualtheorem}
\begin{proof}
WLOG let $v=0$ again. By Adam's law,
\begin{align*}
\P_{\!\w,\b}\{\H[\w;\b]\text{ separates}\}=\underset{\w}{\E}(\P_{\!\b}\{\H[\w;\b]\text{ separates}\mid\w\})
\end{align*}
Let $\alpha=\cos^{-1}\lvert(\w\mid\omega)\rvert,$ with $\omega$ as in Theorem \ref{thm:random_biases}. Then $\alpha=\measuredangle(a,\R\w).$ It follows from \eqref{eq:translated_cone_prob} that $\H[\w;(\w\mid v)],$ a fortiori $\H[\w;\b],$ separates only if $\alpha\leq\tfrac{\pi}{2}-\phi,$ where
\begin{align}\label{eq:phi}
\phi=\sin^{-1}\tfrac{p+r}{p+r+\delta}
\end{align}
Smaller $\alpha$ correspond to larger ranges of separating biases; e.g., if $\alpha=\tfrac{\pi}{2}-\phi,$ only $\b=(\w\mid v)$ separates.

Throughout the next paragraph we consider $\w$ fixed so that $\alpha\in(0,\tfrac{\pi}{2}-\phi].$ Note that $\alpha>0$ almost surely. We also let
\begin{align*}
U=\bigcup\{\H[\w;b]:b\in[-k,k]\text{ so that }\H[\w;b]\text{ separates}\}
\end{align*}
Using that $k\geq\lvert c\rvert\vee\lvert x\rvert>0,$ a computation similar to \eqref{eq:sin_phi} shows that the length of line segment $a\cap U$ is
\begin{align*}
\lvert c-x\rvert-\frac{p+r}{\sin(\tfrac{\pi}{2}-\alpha)}=\biggl(1-\frac{\sin\phi}{\sin(\tfrac{\pi}{2}-\alpha)}\biggr)\lvert c-x\rvert
\end{align*}
Indeed, letting $\mathsf{P}$ be the plane $\w$ spans with $a,$ the angle between $a$ and the line $\H[\w;\b]\cap\mathsf{P}$ is $\tfrac{\pi}{2}-\alpha.$ Projecting onto $\R\w$ yields that the length of $\R\w\cap U$ is
\begin{align*}
\biggl(1-\frac{\sin\phi}{\sin(\tfrac{\pi}{2}-\alpha)}\biggr)\lvert c-x\rvert\cos\alpha=(\cos\alpha-\sin\phi)\lvert c-x\rvert
\end{align*}
Ergo, using Iverson brackets, $\P_{\!\b}\{\H[\w;\b]\text{ separates}\mid\w\}=$
\begin{align*}
\frac{\lvert c-x\rvert}{2k}(\cos\alpha-\sin\phi)[\cos\alpha\geq\sin\phi]
\end{align*}
Plugging this back into the expectation, we get
\begin{align*}
\frac{\lvert c-x\rvert}{2k}\underset{\w}{\E}\Bigl(\Bigl(\lvert(\w\mid\omega)\rvert-\sin\phi\Bigr)\Bigl[\lvert(\w\mid\omega)\rvert\geq\sin\phi\Bigr]\Bigr)
\end{align*}
By the well-known Funk--Hecke formula \cite{chafai} the above expectation equals
\begin{gather*}
\frac{1}{B(\tfrac{n-1}{2},\tfrac{1}{2})}\int_{-1}^{+1}\Bigl(\lvert s\rvert-\sin\phi\Bigr)\Bigl[\lvert s\rvert\geq\sin\phi\Bigr](1-s^2)^{(n-3)/2}\,ds\\
=\frac{2}{B(\tfrac{n-1}{2},\tfrac{1}{2})}\int_{\sin\phi}^1(s-\sin\phi)(1-s^2)^{(n-3)/2}\,ds
\end{gather*}
The above integral evaluates to
\begin{align*}
\frac{\displaystyle\Q^{(n-1)/2}}{\tfrac{n-1}{2}B(\tfrac{n-1}{2},\tfrac{1}{2})}-I(\Q;\tfrac{n-1}{2},\tfrac{1}{2})\sin\phi
\end{align*}
where we recall that $\Q=1-\sin^2\phi.$ Putting everything together now yields the desideratum.
\end{proof}
It behooves us to verify that the resultant probability in Theorem \ref{thm:fully_random} is indeed a probability, since it is no longer immediately clear.
\begin{lem}
If $\alpha\in(0,\tfrac{\pi}{2}),$ then
\begin{gather*}
I(\cos^2\alpha;\tfrac{n-1}{2},\tfrac{1}{2})\sin\alpha\leq\\
\frac{\displaystyle\cos^{n-1}\alpha}{\tfrac{n-1}{2}B(\tfrac{n-1}{2},\tfrac{1}{2})}\leq I(\cos^2\alpha;\tfrac{n-1}{2},\tfrac{1}{2})
\end{gather*}
\end{lem}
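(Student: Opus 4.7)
My plan is to reduce the lemma to a pointwise comparison of three integrands on $[0,\kappa],$ where $\kappa := \cos^2\alpha.$ The key observation is that the middle expression — which at first glance looks unrelated to a regularized beta integral — is itself a beta-type integral with trivial $(1-s)$-weight.

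Setting $y := \tfrac{n-1}{2}$ and noting $\cos^{n-1}\alpha = \kappa^{y},$ I would first rewrite all three terms as a common prefactor $1/B(y,\tfrac{1}{2})$ times an integral over $[0,\kappa].$ Using $\int_0^\kappa s^{y-1}\,ds = \kappa^y/y,$ the middle term becomes $\tfrac{1}{B(y,1/2)}\int_0^\kappa s^{y-1}\,ds;$ using $\sin\alpha = \sqrt{1-\kappa}$ and the definition of $I,$ the left and right terms are $\tfrac{1}{B(y,1/2)}\int_0^\kappa s^{y-1}\sqrt{1-\kappa}\,(1-s)^{-1/2}\,ds$ and $\tfrac{1}{B(y,1/2)}\int_0^\kappa s^{y-1}(1-s)^{-1/2}\,ds$ respectively.

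After cancelling the common prefactor, the lemma reduces to the pair of pointwise estimates
$$\sqrt{1-\kappa}\,(1-s)^{-1/2} \leq 1 \leq (1-s)^{-1/2}, \qquad s \in [0,\kappa].$$
The right inequality is immediate from $1-s \leq 1.$ The left inequality is equivalent to $1-s \geq 1-\kappa$ on $[0,\kappa],$ which also holds. Multiplying by $s^{y-1}$ and integrating over $[0,\kappa]$ then yields both halves.

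There is no real obstacle in this plan: the only piece of insight needed is the rewriting of the middle expression as $\int_0^\kappa s^{y-1}\,ds$ times the usual $1/B(y,\tfrac{1}{2})$ prefactor, after which the whole lemma collapses to the elementary monotonicity of $s \mapsto 1-s$ on $[0,\kappa].$
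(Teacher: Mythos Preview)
Your proof is correct and is essentially identical to the paper's: the paper also sets $\kappa=\cos^2\alpha$, uses the pointwise bound $1\leq(1-s)^{-1/2}\leq(1-\kappa)^{-1/2}=\csc\alpha$ on $[0,\kappa]$, multiplies by $s^{(n-3)/2}$, and integrates. Your inequality $\sqrt{1-\kappa}\,(1-s)^{-1/2}\leq 1$ is just the paper's $(1-s)^{-1/2}\leq\csc\alpha$ multiplied through by $\sin\alpha$, so the two arguments differ only in presentation.
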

\begin{proof}
Let $\kappa=\cos^2\alpha.$ Plainly $s\in[0,\kappa]\Rightarrow$
\begin{align*}
1\leq(1-s)^{-1/2}\leq(1-\cos^2\alpha)^{-1/2}=\csc\alpha
\end{align*}
Integration now yields that
\begin{gather*}
\int_0^\kappa s^{(n-3)/2}\,ds\leq\int_0^\kappa s^{(n-3)/2}(1-s)^{-1/2}\,ds\\
\leq\csc\alpha\int_0^\kappa s^{(n-3)/2}\,ds
\end{gather*}
The rest is trivial.
\end{proof}
Applying the Lemma with $\alpha=\phi$ as in \eqref{eq:phi} evinces that  the resultant probability in Theorem \ref{thm:fully_random} is indeed a probability, since $\lvert c-x\rvert\leq\lvert c\rvert+\lvert x\rvert\leq 2k$ as well. 

\section{Discussion}\label{sec:discussion}

In \S\ref{sec:intro} we suggested that partially random neural network layers could be a golden mean between fully learned and fully random layers. We shall therefore discuss how the separating probability for fully random hyperplanes com- pares to those for random weights and random biases.

Comparing the resultant probabilities in Theorems \ref{thm:random_weights} and \ref{thm:fully_random} is easy thanks to our Lemma:
\begin{gather*}
\frac{\lvert c-x\rvert}{2k}\biggl(\frac{\displaystyle\Q^{(n-1)/2}}{\tfrac{n-1}{2}B(\tfrac{n-1}{2},\tfrac{1}{2})}-\tfrac{p+r}{p+r+\delta}I(\Q;\tfrac{n-1}{2},\tfrac{1}{2})\biggr)\\
\leq\frac{\displaystyle\Q^{(n-1)/2}}{\tfrac{n-1}{2}B(\tfrac{n-1}{2},\tfrac{1}{2})}-\tfrac{p+r}{p+r+\delta}I(\Q;\tfrac{n-1}{2},\tfrac{1}{2})\\
\leq\frac{\displaystyle\Q^{(n-1)/2}}{\tfrac{n-1}{2}B(\tfrac{n-1}{2},\tfrac{1}{2})}\leq I(\Q;\tfrac{n-1}{2},\tfrac{1}{2})
\end{gather*}
which makes sense: choosing the bias optimally should improve the separating probability.

Of note is also that $\Q\in(0,1)\Rightarrow$
\begin{gather*}
\frac{\displaystyle\Q^{(n-1)/2}}{\tfrac{n-1}{2}B(\tfrac{n-1}{2},\tfrac{1}{2})}<\frac{\Gamma(\tfrac{n}{2})}{\tfrac{n-1}{2}\Gamma(\tfrac{n-1}{2})\Gamma(\tfrac{1}{2})}\\
=\frac{\Gamma(\tfrac{n}{2})}{\Gamma(\tfrac{n+1}{2})\sqrt{\pi}}\sim\sqrt{2/(n\pi)}
\end{gather*}
so $\P\{\H[\w;\b]\text{ separates}\}\preccurlyeq 1/\sqrt{n}$ where ``$\sim$'' \& ``$\preccurlyeq$'' are as in \cite[pp.\ 2f.]{hardy}; i.e., fully random hyperplanes separate balls really poorly if the ambient dimension is large regardless of $p,r,\delta.$

Note that this is not the case for hyperplanes with random biases (and optimal weights). There the separating probabi- lity can get as close to 1 as desired if $\tfrac{p+r}{p+r+\delta}$ is sufficiently small (compared to the ambient dimension).

Comparing the resultant probabilities in Theorems \ref{thm:random_biases} and \ref{thm:fully_random} is a bit harder. Recall that $\P\{\H[\w;\b]\text{ separates}\mid\w\}=$
\begin{gather*}
\frac{\cos\alpha}{2k}\biggl(\lvert c-x\rvert-\frac{p+r}{\sin(\tfrac{\pi}{2}-\alpha)}\biggr)[\cos\alpha\geq\sin\phi]\\
\leq\frac{1}{2k}\Bigl(\lvert c-x\rvert-p-r\Bigr)=\delta/(2k)
\end{gather*}
so $\P\{\H[\w;\b]\text{ separates}\}=\E(\P\{\H[\w;\b]\text{ separates}\mid\w\})\leq$ $\E(\delta/(2k))=\delta/(2k)=\P\{\exists w:\H[w,\b]\text{ separates}\},$ as is to be expected.

Like the resultant probability for random weights, the resul- tant probability for random biases is unlike the resultant probability for fully random hyperplanes in the sense that $\delta/(2k)\not\preccurlyeq 1/\sqrt{n}.$

All in all, we have seen that hyperplanes with random bi- ases (and optimal weights) or random weights (and optimal biases) are significantly better at separating small balls in a high dimensional space than fully random hyperplanes. But small balls in high dimensional space is exactly the situation we talked about in \S\ref{sec:intro}. This suggests that partially random neural networks may be quite a bit better at classifying low dimensional manifolds than their fully random counterparts. A detailed exploration of this topic is beyond the scope of this paper, however.

The author is currently cosupervising Guido Wagenvoorde (master student at Utrecht University) who is currently, i.a., numerically comparing RVFL, a neural net of the form
\begin{align}\label{eq:rvfl}
\R^n\ni x\mapsto \sum_{j=1}^m a_j\rho((w_j\mid x)+b_j)=(a\mid\varrho)\\
a=\{a_j\}_{j=1}^m,\varrho=\Bigl\{\rho((w_j\mid x)+b_j)\Bigr\}_{j=1}^m\notag
\end{align}
with random $w_j$'s and $b_j$'s, with RVFL with learned biases (random $w_j$'s only) in his thesis. Many popular activation functions $\rho,$ like ReLU \& (hard) tanh, are positive on posi- tive inputs and nonpositive on nonpositive inputs, thus in which cell of the hyperplane tessellation induced by the $\H[w_j;b_j]$'s $x$ lies determines in which octant $\varrho$ lies when- ever $\rho$ is positive for positive inputs and nonpositive for nonpositive inputs, evincing the linkage between \eqref{eq:rvfl} and hyperplane tessellations.

In light of the linkage between \eqref{eq:rvfl} and hyperplane tessella- tions, RVFL thus corresponds to fully random hyperplanes and RVFL with learned biases corresponds to hyperplanes with random weights (and optimal biases). Extending our results to partly random hyperplane tessellations would be \\ a nice follow-up direction, seeing as (random) hyperplane tessellation is an active area of research interesting in its own right \cite{hyperplane_tessellation}.

\section*{Acknowledgements}

The author wants to thank Guido Wagenvoorde for making plots especially for this paper, and apologize for not being able to fit them within the allotted four pages. I'm sorry.

The author would also like to thank Palina Salanevich for her feedback on the initial draft of this paper.


\appendix

Whilst the author was unable to fit Wagenvoorde's plot in the to-be conference proceedings version, the author felt it would be a shame not to expound on them in this version, hence this appendix.

Wagenvoorde and the author had four architectures of the form \eqref{eq:rvfl} memorize circular octads of 2D points as stand-ins for Euclidean balls, see Fig.\ \ref{fig}. In each case $m=180,$ the networks were trained using the stochastic gradient descent method Adam, and the decision boundary was obtained by taking the zero loci of the networks.

Comparing the plots in Fig. \ref{fig}, the purple circular octad around $(6,-1)$ is more cleanly separated by ReLU RVFL with learned biases than ReLU RVFL and the yellow circu- lar octad around $(0,-1)$ is more cleanly separated by hard- tanh RVFL with learned biases than hardtanh RVFL, thus weakly corroborating our findings.

In the Discussion we conjectured that RVFL with learned biases outperforms RVFL greatliest in the regime of high- dimensional balls with small radii based on our theorems. Unfortunately, we only considered low-dimensional (2D) balls with relatively large radii for our numerics, as high- dimensional setups are too computationally expensive.

Lastly, we would be remiss not to mention that the decision boundaries in Fig. \ref{fig} are the result of a single realization, thus their evidential value is limited. For a more complete numerical juxtaposition of partly random shallow neural network architectures, we refer the interested reader to Wagenvoorde's upcoming master thesis.

\begin{figure*}
\includegraphics[width=\textwidth]{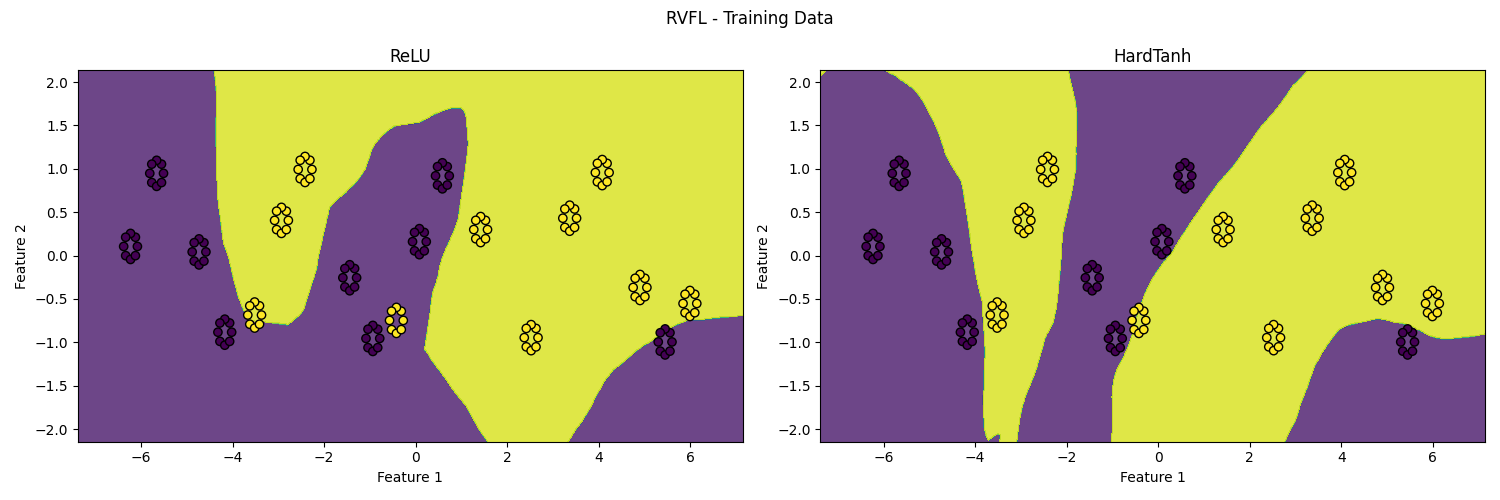}
\includegraphics[width=\textwidth]{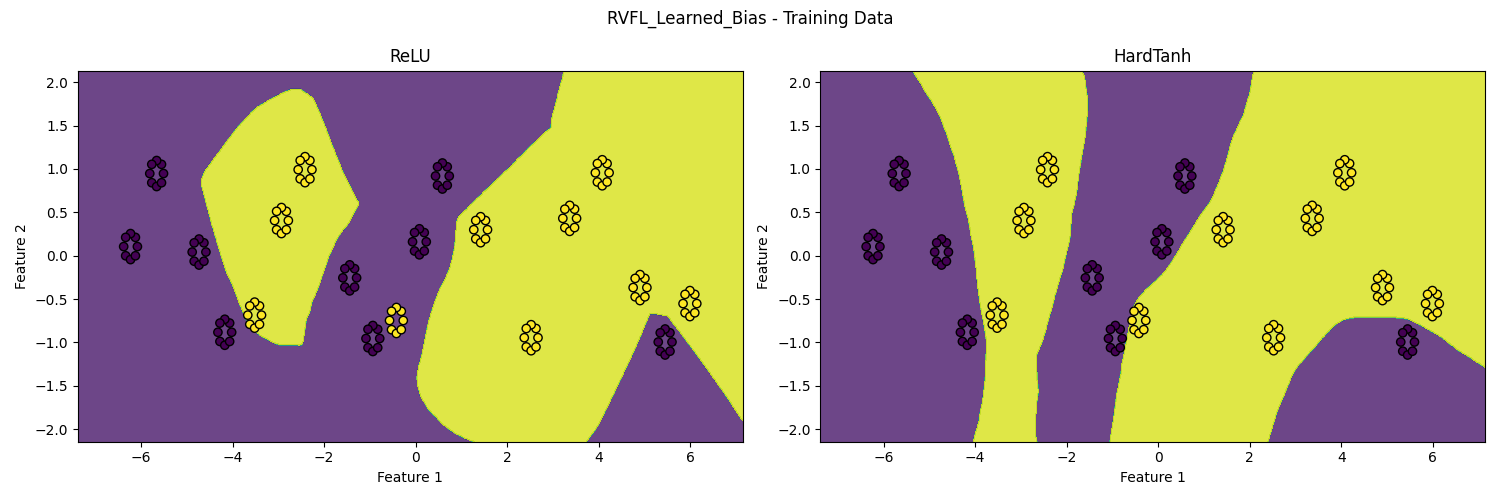}
\caption{\raggedright Binary classification decision boundaries for RVFL (top) and RVFL with learned biases (bottom) with ReLU (left) and hardtanh (right) activation functions. Figure courtesy of Guido Wagenvoorde, Mathematics Department, Utrecht University (master student).}
\label{fig}
\end{figure*}

\end{document}